\documentclass[a4paper,11pt]{amsart}
\usepackage{fancyhdr}
\usepackage{amsmath}
\usepackage{dsfont}
\usepackage{hyperref}
\usepackage[mathscr]{eucal}
\usepackage[cp1251]{inputenc}
\usepackage[english]{babel}
\usepackage{cite,enumerate,float,indentfirst}
\usepackage{graphicx}
\usepackage{xcolor}
\usepackage{latexsym,a4,mathrsfs,amsthm,amsmath,amssymb,url}
\usepackage{amsfonts}
\usepackage{amssymb}
\usepackage{epstopdf}

\numberwithin{equation}{section}
\textwidth157mm
\setlength{\evensidemargin}{4mm}
\setlength{\oddsidemargin} {4mm}

\newcommand\NoBlackBoxes{\global\overfullrule0pt}
\NoBlackBoxes
\parindent 15 pt

\newtheorem{theorem}{Theorem}[section]
\newtheorem{lemma}[theorem]{Lemma}
\newtheorem{statement}[theorem]{Statement}

\theoremstyle{remark}
\newtheorem*{remark}{Remark}

\newcommand{\R}{\mathbb{R}}
\newcommand{\C}{\mathbb{C}}


\newcommand{\ve}{\varepsilon}

\DeclareMathOperator{\Deg}{deg}
\DeclareMathOperator{\E}{\mathbb{E}}

\DeclareMathOperator{\Pb}{\mathbb{P}}

\begin{document}

\vspace{1in}

\title[Asymptotic analysis of symmetric functions]{\bf Asymptotic analysis of symmetric functions}


\author[F. G{\"o}tze]{F. G{\"o}tze}
\address{F. G{\"o}tze\\
 Faculty of Mathematics\\
 Bielefeld University \\
 Bielefeld, 33615, Germany
}
\email{goetze@math.uni-bielefeld.de}

\author[A. Naumov]{A. Naumov}
\address{A. Naumov\\
 Faculty of Computational Mathematics and Cybernetics\\
 Moscow State University \\
 Moscow, 119991, Russia\\
 and Institute for Information Transmission Problems RAS, Moscow, 127051, Russia
 }
\email{anaumov@cs.msu.su}

\author[V. Ulyanov]{V. Ulyanov}
\address{V. Ulyanov\\
 Faculty of Computational Mathematics and Cybernetics\\
 Moscow State University \\
 Moscow, 119991, Russia\\
 and National Research University Higher School of Economics (HSE),
 Moscow, 101000, Russia
 }
\email{vulyanov@cs.msu.su}
\thanks{F. G{\"o}tze was supported by CRC 701 ``Spectral Structures and Topological
Methods in Mathematics'', Bielefeld. A. Naumov and V. Ulyanov were supported by RSCF~14-11-00196.}

\keywords{Symmetric functions, asymptotic expansions, functional limit theorems, Edgeworth expansion, U-statistics, invariance principle}

\dedicatory{Dedicated to Professor Yuri Prokhorov on the occasion
     of his $85$th birthday\\ (15.12.1929--16.07.2013)}

\date{\today}

\begin{abstract}
In this paper we consider asymptotic expansions for a class of sequences of symmetric functions of many variables.
Applications to classical and free probability theory are discussed.
\end{abstract}

\maketitle

\section{Introduction}

Most limit theorems such as the central limit theorem in finite dimensional and abstract spaces and the functional limit theorems
admit refinements in terms of asymptotic expansions in powers of $n^{-1/2}$, where  $n$ denotes the number of observations.
Results on asymptotic expansions of this type are summarized in many monographs, see for example~\cite{BhatRang2010}.

These expansions are obtained by very different techniques such as expanding the characteristic function of the particular statistic  for instance  in the case of linear statistics of independent observations, see e.g. Ch.~2, in~\cite{BhatRang2010} and  \S~1, Ch.~6, in~\cite{Petrov1975}.
 Other techniques  combine convolutions and characteristic functions to develop expansions for quadratic forms, see e.g.~\cite{GotzeZaitsev2014} and~\cite{ProkhUlyanov2013},
or for some discrete  distributions expansions are derived  starting   from a combinatorial formula for its distribution function, see e.g.~\cite{Gnedenko1961} and~\cite{Lauwerier1963}.
Alternatively  one might use  an expansion for an underlying empirical process and evaluate it on a domain defined by a functional
or statistic of this process. In those cases one would need to make approximations by Gaussian processes in suitable function spaces.

The aim of this paper is to show that for most of these expansions one could safely
ignore the underlying probability model and its ingredients  (like e.g.  proof of existence
of limiting processes and its properties).  
Indeed, similar expansions  can be derived in all of these models using a general scheme reflecting some (hidden) common features.
This is the universal collective  behavior caused by many independent asymptotically negligible variables influencing the distribution of  a functional.

The results of this paper may be considered as the extension of the results given by F.~G{\"o}tze in~\cite{Gotze1985} where
the following scheme of sequences of symmetric functions is studied.
Let  $h_n(\varepsilon, ... , \varepsilon_n), n \geq 1$  denote a sequence of  real functions defined on  $\R^{n}$ and suppose that the following conditions hold:
\begin{align}\label{cond_1}
&h_{n+1}(\varepsilon_1, ... , \varepsilon_{j}, 0, \varepsilon_{j+1}, ... , \varepsilon_n) = h_n(\varepsilon_1, ... , \varepsilon_j, \varepsilon_{j+1}, ... , \varepsilon_n);\\
&\frac{\partial}{ \partial \varepsilon_j} h_n(\varepsilon_1, ...,\varepsilon_{j}, ..., \varepsilon_n )\bigg|_{\varepsilon_j = 0} = 0 \text{ for all } j = 1, ... , n;\label{cond_2}\\
& h_n(\varepsilon_{\pi(1)}, ... ,\varepsilon_{\pi(n)}) = h_n(\varepsilon_1, ... , \varepsilon_n) \text{ for all } \pi \in S_n, \label{cond_3}
\end{align}
where  $S_n$ denotes the symmetric group.

 Consider the class of examples. Put
\begin{equation}\label{ex0}
h_n(\varepsilon_1,\ldots,\varepsilon_n)={\E_P} F(\varepsilon_1 (\delta_{X_1}-P)+ \ldots + \varepsilon_n(\delta_{X_n}-P)),
\end{equation}
where $F$ denotes a smooth functional defined on the space of signed measures and  $X_j$ denote random elements (in an arbitrary space) with common distribution $P$.
Thus $h_n$ is  the expected value of the functional $F$  of a {\it weighted} empirical process (based on the Dirac-measures in $X_1,\ldots, X_n$). Here property \eqref{cond_1} is obvious. Property \eqref{cond_2}, that is the locally quadratic dependence  on the weights  around zero, is  a consequence of the smoothness of $F$ and
the centering in $P$-mean of the signed random measures  $\delta_{X_J}-P$ and \eqref{cond_3} follows from the identical distribution of the$X_j$-s.
Properties \eqref{cond_1} and \eqref{cond_3} suggest to consider
the argument  $\epsilon_j$ of $h_n$ as  a weight  which controls the effect that   $X_j$ has on the distribution of the functional.
In \cite{Gotze1985}  the first author considered limits and expansions for functions  $h_n$ of equal weights $\varepsilon_j=n^{-1/2}, 1\le j\le n$ with applications to the case \eqref{ex0}. Those results explained the common structure of expansions for identical weights developed e.g. in~\cite{BhatRang2010} and \cite{GotzeHipp1978, Gotze1981, Gotze1989}.
In the following this scheme will be extended to the  case of {\it non identical} weights
$\varepsilon_j$, like in the class of examples where the functions $h_n$ are given by \eqref{ex0}. Moreover, the dependence of $F$ on the elements $X_j$ may be non linear.

Denote by $\varepsilon$ the $n$-vector $\varepsilon_j, 1\le j\le n$ and
by $\ve^d:=\sum_{j=1}^n\ve_j^d, d\ge 1$ the $d$-th power sum.
In the following we shall show that \eqref{cond_1}-\eqref{cond_3} ensure
the existence of a ''limit'' function
$h_{\infty}(\ve^2, \lambda_1,\ldots, \lambda_s)$ as a first
order approximation of $h_n$ together with ''Edgeworth-type'' asymptotic expansions, see e.g. the case of sums of non identically distributed random variables in Ch.6,~\cite{Petrov1975}.
These expansions are given in terms of polynomials of power sums  $\ve^d,d \ge 3 $. The coefficients  of these ''Edgeworth''-Polynomials, defined in \eqref{Edgeworthp} below, are given by derivatives of the limit function $h_{\infty}$
at  $\lambda_1=0, \ldots, \lambda_s=0$.

\begin{remark}[Algebraic Representations]
In case that $h_n$ is a {\it multivariate polynomial} of $\ve$, satisfying \eqref{cond_1}--\eqref{cond_3}, we may express it as polynomial in the algebraic base, $\ve^d, d \ge 1$, of symmetric power sums of $\ve$ with constant coefficients.
Note that
$$
\frac{\partial}{\partial_{\ve_j}}\ve^d\Big|_{\ve_j=0} = \delta_{d,1},
$$
where $\delta_{d,1}=1$, if $d=1$ and zero otherwise. Hence, \eqref{cond_2} entails  that in this representation $h_n$ is a polynomial of $\ve^d, d \ge 2$ only and does not depend on $\ve^1$.
Now we may write
$$
h_n(\ve)=P_{\ve^2}(\ve^3,\ldots, \ve^n  ),
$$
where $P_{\ve^2}$ denotes a polynomial with coefficients in the polynomial ring $\C[\ve^2]$ of the variable $\ve^2$.
Restricting ourselves to the sphere  $\ve^2=1$ for convenience, $P_{\ve^2}$ is the desired ''Edgeworth'' expansion, provided we introduce the following  grading of  monomials in the variables  $\ve^d, d \ge 3$ via $\Deg(\ve^d):= d-2$ and expand the polynomial $h_n$ in monomials of $\ve^3,\ldots, \ve^n$ by collecting terms according to this  grading.
\end{remark}

\subsection{Notations} Throughout the paper we will use the following notations.
We denote $\varepsilon^{d} := \sum_{i=1}^n \varepsilon_i^{d}$ and $|\varepsilon|^{d}: = \sum_{i=1}^n |\varepsilon_i|^{d}$.
Furthermore, we denote by $(\ve)_d$ and $|\ve|_d$ the $d$-th root of $\ve^{d}$ and $|\ve|^{d}$ respectively, i.e. $(\ve)_d : = (\ve^d)^{1/d}$ and $|\ve|_d: = (|\ve|^d)^{1/d}$. By $c$ or $C$ with indices or without we denote absolute constants, which can be different
in different places. Let $D^\alpha$, where $\alpha$ is a nonnegative integral vector, denote partial derivatives $\frac{\partial^{\alpha_1}}{\partial \ve_1^{\alpha_1}}...\frac{\partial^{\alpha_m}}{\partial \ve_m^{\alpha_m}}$, and
finally let $\alpha = \sum_{j=1}^m \alpha_j$.

\section{Results}

Introduce for an integer $s\geq0$ the functions,
\begin{equation}\label{limit function}
h_\infty(\lambda_1, ... , \lambda_s, \lambda): = \lim_{k \rightarrow \infty} h_{k+s} \left(\lambda_1, ... , \lambda_s, \frac{\lambda}{\sqrt{k}}, ... , \frac{\lambda}{\sqrt{k}}\right).
\end{equation}
Thus we consider the limit functions of $h_{k+s}$ as $k\rightarrow\infty$, where all but $s$ arguments are  equal, asymptotically negligible and taken from a $k-1$-sphere. In Theorems below we give sufficient conditions for the existence of the limits.
The following theorem is an analogue of the Berry-Esseen type inequality for sums of  non identically distributed independent random variables in probability theory, see e.g. Ch.6 in \cite{Petrov1975}.
\begin{theorem}\label{th_BE}
Assume that $h_n(\cdot), n \geq 1$, satisfies conditions~\eqref{cond_1}--\eqref{cond_3} and with some positive constant $B$ we have
\begin{align}\label{3dmomentcondition}
|D^{\alpha} h_n (\varepsilon_1, ... , \varepsilon_n)| \le B,
\end{align}
for all  $\varepsilon_1, ... , \varepsilon_n$,
and for all $\alpha = (\alpha_1, ... , \alpha_r)$ with $r \le 3$ such that
$$
\alpha_j \geq 2, \quad j = 1, ... , r, \quad \sum_{j = 1}^r (\alpha_j - 2) \le 1.
$$
Then there exists  $h_\infty(|\varepsilon|_2)$ defined by~\eqref{limit function} with $s=0$ and
$$
|h_n(\varepsilon_1, ... , \varepsilon_n) - h_\infty(|\varepsilon|_2)| \le c \cdot B \cdot \max (1, |\varepsilon|_2^3) |\varepsilon|^3,
$$
where $c$ is an absolute constant.
\end{theorem}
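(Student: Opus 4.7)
The plan is to prove the statement by a Lindeberg-style telescoping argument that simultaneously establishes the existence of $h_\infty(|\varepsilon|_2)$ and produces the quantitative bound. Choose a large integer $k$, set $\delta:=|\varepsilon|_2/\sqrt{k}$ so that $k\delta^2=|\varepsilon|_2^2$, and use \eqref{cond_1} to embed both
\[
h_n(\varepsilon)=h_{n+k}(\varepsilon_1,\ldots,\varepsilon_n,0,\ldots,0),\qquad h_k(\delta,\ldots,\delta)=h_{n+k}(0,\ldots,0,\delta,\ldots,\delta)
\]
into a common $h_{n+k}$. I would then interpolate between these two configurations by a sequence of single-coordinate swaps (say, first replace the $k$ zeros by $\delta$'s, then replace the original $\varepsilon_j$'s by zeros), and take $k\to\infty$ at the end.

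The structural preparatory fact, which I would derive first, is that by differentiating \eqref{cond_2} in the remaining variables and invoking Schwarz's theorem, every mixed partial $D^\alpha h_n$ with some index $\alpha_i=1$ vanishes when evaluated at a point with $\varepsilon_i=0$. From this it follows in particular that $\partial_i\partial_{jj}^2 h_n|_{\varepsilon_i=0}=0$ for $i\neq j$, and that $c_2:=\partial_{j}^2 h_n(0,\ldots,0)$ is a single constant (independent of $j$ and of $n$ by \eqref{cond_1} and \eqref{cond_3}); analogous statements hold for the higher ``pure'' constants $c_{2,2}$ and $c_{2,2,2}$. At each interpolation step a single coordinate changes from $a$ to $b$ (one of them zero), and third-order Taylor expansion in that coordinate, with the linear term vanishing by \eqref{cond_2}, yields $\tfrac{1}{2}(a^2-b^2)\cdot D_\ell+O(B(|a|^3+|b|^3))$, where $D_\ell$ is the second derivative of $h_{n+k}$ at the intermediate point with the swapped coordinate set to zero. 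The third-order remainders sum to $O(B|\varepsilon|^3+B|\varepsilon|_2^3/\sqrt{k})$, whose second summand vanishes in the limit.

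The main obstacle is the analysis of the sum of the second-derivative terms $\tfrac{1}{2}(a^2-b^2)D_\ell$. Writing $D_\ell=c_2+(D_\ell-c_2)$, the $c_2$-parts telescope to $\tfrac{c_2}{2}(|\varepsilon|_2^2-k\delta^2)=0$ by the conservation of the $L^2$-norm built into the interpolation. For the corrections, the naive bound $|D-c_2|\le\tfrac{B}{2}|W|_2^2$ (from one Taylor expansion of $\partial_{jj}^2 h_n$ in its nonzero coordinates, using the $(2,2)$-derivative bound together with the preparatory fact) only yields $O(B|\varepsilon|_2^4)$, which is too weak. The remedy is to perform successive layers of Taylor expansion on $D-c_2$ itself, using in turn the $(2,2)$, $(3,2)$, and $(2,2,2)$ derivative bounds that the hypothesis allows; at each new layer the leading $c_{2,\ldots,2}|\varepsilon|_2^{2m}$-part again cancels symmetrically between the two phases of the telescoping (by the same $L^2$-preservation argument), leaving only a residue proportional to $\sum\varepsilon_i^{2m}\le|\varepsilon|_2^{2m-3}|\varepsilon|^3$. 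After three such layers --- which is as many as the hypothesis permits --- the accumulated residues are all of the form $|\varepsilon|^3$ times a power of $|\varepsilon|_2$ bounded by $|\varepsilon|_2^3$, and the claimed bound $cB\max(1,|\varepsilon|_2^3)|\varepsilon|^3$ drops out.

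Finally, the existence of $h_\infty(\lambda)$ emerges as a corollary: applying the same estimate to compare $h_k(\lambda/\sqrt{k},\ldots,\lambda/\sqrt{k})$ and $h_{k'}(\lambda/\sqrt{k'},\ldots,\lambda/\sqrt{k'})$ for $k,k'\to\infty$ shows that the sequence is Cauchy, since for equal-weights configurations $|\varepsilon|^3=|\lambda|^3/\sqrt{k}\to 0$. The delicate step throughout is the iterated second-layer cancellation described above; keeping track of which derivatives are available and in what order the cancellations must be carried out is where the bookkeeping becomes nontrivial.
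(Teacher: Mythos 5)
Your overall architecture (embedding via \eqref{cond_1}, Lindeberg-style telescoping, the vanishing of linear terms by \eqref{cond_2}, cancellation of the leading quadratic terms through conservation of the $\ell_2$-norm, and the Cauchy-sequence argument for the existence of the limit \eqref{limit function}) matches the spirit of the paper, and your preparatory fact about mixed partials with a unit index is correct and is indeed used implicitly throughout. However, there is a genuine gap in the step you yourself flag as delicate: the iterated ``layers'' of expansion of $D_\ell-c_2$ cannot be carried to completion under the stated derivative hypotheses. After extracting $c_2$ and then $c_{2,2}$, the next correction at the $\ell$-th intermediate configuration has the form $\sum_{i'<i}\tfrac{w_{i'}^2 w_i^2}{4}\,\partial_{i'}^2\partial_i^2\partial_j^2 h(\xi_{\ell,i,i'})$ with the $(2,2,2)$-derivative evaluated at points that vary with $\ell$ and with the phase of the telescoping. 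To replace these by a common constant $c_{2,2,2}$ (which is what your ``symmetric cancellation between the two phases'' requires) you must control the variation of a $(2,2,2)$-derivative in a further coordinate, i.e.\ you need a bound of type $(2,2,2,2)$ --- but that has $r=4$ and is excluded by the hypothesis $r\le 3$. Without it, this term can only be bounded by $B|w|_2^4$ pointwise, and after multiplying by $\tfrac{a_\ell^2+b_\ell^2}{2}$ and summing over the $\sim n+k$ swaps you are left with an irreducible error of order $B|\varepsilon|_2^6$ (already $B|\varepsilon|_2^4$ if you stop one layer earlier), which for $|\varepsilon|_2=1$ is $O(B)$ and not $O(B|\varepsilon|^3)$. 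So the hypothesis does not ``permit three layers'' in the sense your argument needs.

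The paper avoids this difficulty by choosing a different interpolation geometry, and this is the idea missing from your proposal. Rather than swapping single coordinates across the whole configuration (which forces you to compare second derivatives at many different global base points), it replaces one coordinate $\varepsilon_j$ by the entire block $(\varepsilon_j/\sqrt{k},\ldots,\varepsilon_j/\sqrt{k})$ in a single multivariate Taylor step about the block configuration $(\varepsilon_j,0,\ldots,0)$, holding all other coordinates fixed. After one further expansion setting the first block coordinate to zero, \emph{every} surviving first- and second-order coefficient is the same derivative evaluated at the \emph{same} base point, multiplied by the scalar identity \eqref{eq: variance}, which vanishes exactly; no comparison of second derivatives across configurations is needed, so no fourth derivative block is ever invoked, and the per-block error is genuinely local, $O(B(|\varepsilon_j|^3+|\varepsilon_j|^4+|\varepsilon_j|^6))$, summing to $O(B\max(1,|\varepsilon|_2^3)|\varepsilon|^3)$. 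A second pass of the same type (equalizing the $nk$ block entries to $|\varepsilon|_2/\sqrt{k}$) and your Cauchy argument then finish the proof. If you wish to salvage your route, you would have to reorganize the swaps so that all quadratic coefficients attached to one cancellation identity are anchored at a single point --- which is precisely the block replacement.
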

In case that $\varepsilon$  depends on $n$,  this theorem shows that if
\begin{equation}\label{thiedmomentconvergence}
\lim_{n \rightarrow \infty}|\varepsilon|_3 = 0
\end{equation}
then $h_n(\ve_1, ... , \ve_n)$ converges to the limit function $h_\infty(|\varepsilon|_2)$, which depends on $\varepsilon_1, ... , \varepsilon_n$ via  the $l_2$-norm $|\varepsilon|_2$ only.
This means that the sequence of symmetric functions (invariant with respect to $S_n$) may be approximated by a rotationally invariant function (invariant with respect to the orthogonal group $\mathcal O_n$).

Note though  that if~\eqref{thiedmomentconvergence} holds,  Theorem~\ref{th_BE}  doesn't provide an explicit formula for the function $h_{\infty}(|\varepsilon|_2)$, but guarantees its existence.

\begin{remark}
Investigating distributions of weighted sums, it has been shown in~\cite[Lemma~4.1]{KlarSodin2011}, that~\eqref{thiedmomentconvergence}  holds with high probability under the uniform measure,
 see as well inequality \eqref{KSbound} below.
\end{remark}
\begin{proof}[Proof of Theorem~\ref{th_BE}]
We divide the proof into three steps. In the first step we substitute each argument $\varepsilon_j$ by a block of the length $k$ of equal variables $\varepsilon_j / \sqrt{k}$.
This procedure doesn't change the $l_2$-norm $|\varepsilon|_2$. After $n$ steps we arrive at a function which depends  on $n \times k$ arguments
\begin{equation}\label{block function}
h_{nk}\left( \frac{\varepsilon_1}{\sqrt k}, ... , \frac{\varepsilon_1}{\sqrt k} , ... , \frac{\varepsilon_n}{\sqrt k}, ..., \frac{\varepsilon_n}{\sqrt k}  \right).
\end{equation}
We show that
\begin{align}\label{step_1}
&\left |h_n(\varepsilon_1, ... , \varepsilon_n) - h_{nk}\left( \frac{\varepsilon_1}{\sqrt k}, ... , \frac{\varepsilon_1}{\sqrt k} , ... , \frac{\varepsilon_n}{\sqrt k}, ..., \frac{\varepsilon_n}{\sqrt k}  \right)\right | \le c  \cdot B \cdot \max(1, |\ve|_2^3)|\ve|^3.
\end{align}
Hence this approximation step corresponds to Lindeberg's scheme of replacing  the
summands in the central limit theorem in probability theory by
corresponding  Gaussian random variables one by one (see, e.g., \cite{Lindeberg}
  and further development in \cite{Bergstroem}  and extension to
an invariance principle in \cite{Chaterjee}).
Here the replacement is performed not with a Gaussian variable but with a large block of equal weights of corresponding
$l_2$-norm. In the {\it second step},
still fixing $n$, we  determine the limit of the sequence of functions~\eqref{block function}, as $k$ goes to infinity.
We will show that in this case the limit depends on  $\varepsilon_1, ... , \varepsilon_n$, through its $l_2$-norm $|\varepsilon|_2$ only. It will be shown that
\begin{align}\label{step_2}
&\left |h_{nk}\left( \frac{\varepsilon_1}{\sqrt k}, ... , \frac{\varepsilon_1}{\sqrt k} , ... , \frac{\varepsilon_n}{\sqrt k}, ..., \frac{\varepsilon_n}{\sqrt k}  \right) -  h_{k}\left( \frac{|\varepsilon|_2}{\sqrt k}, ... , \frac{|\varepsilon|_2}{\sqrt k} \right )\right| \le c(\varepsilon) \cdot B \cdot k^{-1/2},
\end{align}
where $c(\ve)$ is some positive constant depending on $\ve$ only.

Finally, we may apply the arguments from Proposition~2.1 in~\cite{Gotze1985}. We show that there exists some function $h_\infty(|\ve|_2)$ such that
\begin{equation}\label{step_3}
\left |h_{k}\left( \frac{|\varepsilon|_2}{\sqrt k}, ... , \frac{|\varepsilon|_2}{\sqrt k} \right ) - h_\infty(|\varepsilon|_2)\right| \le c(\ve) \cdot B \cdot k^{-1/2}.
\end{equation}
From~\eqref{step_1}--~\eqref{step_3} it follows that
$$
|h_n(\varepsilon_1, ... , \varepsilon_n) - h_\infty(|\varepsilon|_2)| \le C \cdot B \cdot \max(1, |\ve|_2^3) |\varepsilon|^3 +  c(\ve) \cdot B \cdot  k^{-1/2} .
$$
Taking the limit  $k\to \infty$ we conclude the statement of the Theorem. In the following we shall provide the details for proof of the steps outlined above.

\noindent{\it First step}. We introduce additional notations. For simplicity let us denote $f_k(\delta_1, ... , \delta_k): = h_{n+k-1}(\delta_1, ... , \delta_k, \varepsilon_2, ... ,\varepsilon_n)$ and
\begin{align*}
&\underline \delta_{k} : = (\delta_1, ... , \delta_{k}) := \left (\frac{\varepsilon_1}{\sqrt k}, ... , \frac{\varepsilon_1}{\sqrt k} \right ) , \quad
\underline \delta_{k}^0 : = (\delta_1^0, ..., \delta_k^0):= (\varepsilon_1, 0, ... , 0).
\end{align*}
Using Taylor's formula we may write
\begin{align*}
f_{k}(\underline \delta_k) - f_{k}(\underline \delta_k^0) = \sum_{j=1}^k \frac{\partial f_{k}(\underline \delta_k^0)}{\partial \delta_j} (\delta_j - \delta_j^0) +\frac{1}{2} \sum_{j,l = 1}^k \frac{\partial^2 f_{k}(\underline \delta_k^0)}{\partial \delta_j \partial \delta_l} (\delta_j - \delta_j^0)(\delta_l - \delta_l^0) +R_{31},
\end{align*}
where $R_{31}$ is a remainder term which will be estimated later. In what follows we shall denote by $R_{3i}$, for some $i \in \mathbb N,$ the remainder terms in Taylor's expansion.
By~\eqref{cond_1} all summands in the first sum equals zero except for $j=1$. Consider the second sum. If $j \neq l$ and $j, l \neq 1$ then the corresponding summand equals zero.
Condition~\eqref{cond_2} yields
$$
\frac{\partial}{\partial \delta_j} \frac{\partial}{\partial \delta_l} f_{k}(\underline \delta_k^0)= 0
$$
provided that $j \neq l$, $j, l \neq 1$ and
$$
\sum^k_{l=2}\frac{\partial}{\partial \delta_1} \frac{\partial}{\partial \delta_l} f_{k}(\underline \delta_k^0)= R_{32}.
$$
for all $l = 2, ... , k$.
Expanding the non zero terms in the first and second sum we obtain
\begin{align*}
&\frac{\partial}{\partial \delta_1} f_{k}(\underline \delta_k^0) = \frac{\partial^2}{\partial \delta_1^2} f_{k}(\underline \delta_k^0)\big|_{\delta_1^0 = 0}  \delta_1^0 + R_{33}, \\
&\frac{\partial^2}{\partial \delta_j^2}f_{k}(\underline \delta_k^0) = \frac{\partial^2}{\partial \delta_1^2}f_{k}(\underline \delta_k^0)\big|_{\delta_1^0 = 0} + R_{34}.
\end{align*}
Applying condition~\eqref{cond_3} we may sum the coefficients of the second derivatives of $f_{k}$ and get
\begin{equation}\label{eq: variance}
\varepsilon_1 \left (\frac{\varepsilon_1}{\sqrt k} - \varepsilon_1 \right ) + \frac{1}{2} \left (\frac{\varepsilon_1}{\sqrt k} - \varepsilon_1 \right )^2 + \frac{k-1}{2} \left (\frac{\varepsilon_1}{\sqrt k} \right )^2 = 0.
\end{equation}
It remains to investigate the terms $R_{3l}, l = 1, ... ,4$, and show that
$$|R_{3l}| \le C \cdot B \cdot (|\varepsilon_1|^3 + |\varepsilon_1|^4 +|\varepsilon_1|^6).$$
Let us consider $R_{31}$. First we note that $R_{31}$ is
the sum of the third derivatives of $f_k$ at some intermediate point $\hat{\underline \delta}_k^0$:
$$
\sum_{j,l,m = 1}^k \frac{\partial^3}{\partial \delta_j \partial \delta_l \partial \delta_m}f_k(\hat{\underline \delta}_k^0)(\delta_j - \hat \delta_j^0)(\delta_l - \hat \delta_l^0)(\delta_m - \hat \delta_m^0).
$$
If the partial derivative with respect to $\delta_j$ (or $\delta_l, \delta_m$) is of order one we need to add an additional expansion  with respect to this variable around zero using~\eqref{cond_2}.
In this way, we finally  get
\begin{align*}
&\left| \sum_{j,l,m = 1}^k \frac{\partial^3}{\partial \delta_j \partial \delta_l \partial \delta_m}f_k(\hat{\underline \delta}_k^0)(\delta_j - \hat \delta_j^0)(\delta_l - \hat \delta_l^0)(\delta_m - \hat \delta_m^0)\right| \le C \cdot B \cdot (|\varepsilon_1|^3 + |\varepsilon_1|^4 +|\varepsilon_1|^6).
\end{align*}
The other terms, that is $R_{3l}, l = 2, 3, 4$, may be treated in a similar way.Repeating  this procedure $n - 1$ times we arrive at the function~\eqref{block function} and the bound~\eqref{step_1}.
\vskip 0.1cm

\noindent{\it Second step}. The proof is similar to the previous step. Here  $n$ is fixed and we derive bounds in terms  of powers of $k^{-1/2}$. Applying assumption~\eqref{cond_3} we may rearrange the arguments in $h_{nk}(\cdot)$ and get
$$
h_{nk}\left( \frac{\varepsilon_1}{\sqrt k}, ... , \frac{\varepsilon_1}{\sqrt k} , ... , \frac{\varepsilon_n}{\sqrt k}, ..., \frac{\varepsilon_n}{\sqrt k}  \right) =
h_{nk}\left( \frac{\varepsilon_1}{\sqrt k}, ... , \frac{\varepsilon_n}{\sqrt k} , ... , \frac{\varepsilon_1}{\sqrt k}, ..., \frac{\varepsilon_n}{\sqrt k}  \right).
$$
Let us denote 
$$
f_n(\delta_1, ..., \delta_n) : = h_{nk}\left(\delta_1, ..., \delta_n, \frac{\varepsilon_1}{\sqrt k}, ... , \frac{\varepsilon_n}{\sqrt k} , ... , \frac{\varepsilon_1}{\sqrt k}, ..., \frac{\varepsilon_n}{\sqrt k}  \right)
$$
and choose the following argument vectors
\begin{align*}
&\underline \delta_{n} : = (\delta_1, ... , \delta_{n}) := \left (\frac{\varepsilon_1}{\sqrt k}, ... , \frac{\varepsilon_n}{\sqrt k} \right ) , \quad \underline \delta_{n}^0 : = (\delta_1^0, ..., \delta_n^0):= \left(\frac{|\varepsilon|_2}{\sqrt k}, 0, ... , 0\right).
\end{align*}
We shall  estimate  $f_n(\underline\delta_{n}) - f_n(\underline\delta_{n}^0)$ by repeating the same arguments as in the first step. We omit the details, but would like mention  that instead of~\eqref{eq: variance} we shall use here that
 \begin{equation*}
\frac{|\varepsilon|_2}{\sqrt{k}} \left (\frac{\varepsilon_1}{\sqrt k} - \frac{|\varepsilon|_2}{\sqrt k} \right ) + \frac{1}{2} \left (\frac{\varepsilon_1}{\sqrt k} - \frac{|\varepsilon|_2}{\sqrt k} \right )^2 + \frac{1}{2} \sum_{j=2}^n \left (\frac{\varepsilon_j}{\sqrt k} \right )^2 = 0.
\end{equation*}
Thus, we finally arrive at the following bound
$$
|f_n(\delta_{n}) - f_n(\delta_{n}^0)| \le c(\varepsilon) \cdot B \cdot k^{-\frac32}.
$$
Repeating this procedure $k - 1$ times we obtain the bound~\eqref{step_2}.\\

\noindent{\it Third step}. We consider the difference of the value of $h_k$ at the point
$$
\underline \varepsilon_k = (|\varepsilon|_2 k^{-1/2}, ... , |\varepsilon|_2 k^{-1/2})
$$
and the value of $h_{k+r}$ at the point
$$
\underline \varepsilon_{k+r} = (|\varepsilon|_2 (k+r)^{-1/2}, ... ,|\varepsilon|_2 (k+r)^{-1/2}).
$$
We show, similar to the arguments in the previous steps,  that (compare as well the proof in~\cite{Gotze1985}[Proposition~2.1]),
$$
|h_{k}(\underline \varepsilon_k) - h_{k+r}(\underline \varepsilon_{k+r})| \le c(\varepsilon) \cdot B \cdot \sum_{p=k}^{k+r-1} p^{-3/2}.
$$
Thus, $h_k(\underline \varepsilon_k)$ is a Cauchy sequence in $k$ with a limit, say $h_\infty(|\varepsilon|_2)$. This fact concludes the proof of the theorem.
\end{proof}

To formulate the asymptotic expansion of the function $h_n(\cdot), n \geq 1$, we have to introduce additional notations.  We introduce the following differential operators by means of formal power series identities.
Define cumulant differential operators $\kappa_p(D)$ by means of
$$
\sum_{p=2}^{\infty} p!^{-1} \varepsilon^p \kappa_p(D) = \ln \left(1 + \sum_{p=2}^\infty p!^{-1} \varepsilon^p D^p \right )
$$
in the formal variable $\varepsilon$. One may easily compute the first cumulants. For example, $\kappa_2 = D^2, \kappa_3 = D^3, \kappa_4 = D^4 - 3 D^2 D^2$.
Define Edgeworth polynomials by means of the following formal series in $\kappa_r, \tau_r$ and the formal variable $\varepsilon$
$$
\sum_{r=0}^{\infty} \varepsilon^r P_r(\tau_{*} \kappa_{*}) = \exp \left (\sum_{r = 3}^{\infty} r!^{-1} \varepsilon^{r-2} \kappa_r \tau_r \right )
$$
which yields
\begin{align}\label{Edgeworthp}
P_r(\tau_{*}\kappa_{*}) = &\sum_{m = 1}^r m!^{-1} \sum_{j_1, ... , j_m} (j_1 + 2)!^{-1} \tau_{j_1 + 2} \kappa_{j_1+2} \\
&\times (j_2 + 2)!^{-1} \tau_{j_2 + 2} \kappa_{j_2+2} ... (j_m + 2)!^{-1} \tau_{j_m + 2} \kappa_{j_m+2},\nonumber
\end{align}
where the sum $\sum_{j_1, ... , j_m}$ extends over all $m$-tuples of positive integers $(j_1, ... , j_m)$ satisfying
$\sum_{q = 1}^m j_q = r$ and $\kappa_{*} = (\kappa_3, ... , \kappa_{r+2}), \tau_{*} = (\tau_3, ... , \tau_{r+2})$.
For example,
\begin{align}
&P_1(\tau_{*} \kappa_{*}) = \frac{1}{6} \tau_3 \kappa_3 = \frac{1}{6} \tau_3^3 D^3, \nonumber \\
\label{P2}
&P_2(\tau_{*} \kappa_{*} ) = \frac{1}{24} \tau_4 \kappa_4 + \frac{1}{72} \tau_3^2 \kappa_3 \kappa_3 = \frac{1}{24} \tau_4 (D^4 - 3 D^2 D^2) + \frac{1}{72} \tau_3^2 D^3 D^3.
\end{align}

In the following theorem we will assume that $\varepsilon$ is a vector on the unit sphere, i.e. $|\varepsilon|_2 = 1$. It is also possible to consider the general case $|\varepsilon|_2 = r, r > 1$,
but then the remainder terms will have a more difficult structure. In what follows we shall drop the dependence of $h_{\infty}$ on  the argument $|\varepsilon|_2$ in the notation of this function.
\begin{theorem}\label{asymptotic_expansion}
Assume that $h_n(\varepsilon_1,..., \varepsilon_n), n \geq 1$, satisfies conditions~\eqref{cond_1}, \eqref{cond_2} and \eqref{cond_3} together with $|\varepsilon|_2 = 1$.
Suppose that
\begin{align}
|D^{\alpha} h_n (\varepsilon_1, ... , \varepsilon_n)| \le B,
\end{align}
for all  $\varepsilon_1, ... , \varepsilon_n$, where $B$ denotes some positive constant, $\alpha = (\alpha_1, ... , \alpha_r), r \le s$, and
$$
\alpha_j \geq 2, \quad j = 1, ... , r, \quad \sum_{j = 1}^r (\alpha_j - 2) \le s -2.
$$
Then
$$
h_n(\varepsilon_1, ... , \varepsilon_n) = h_\infty +
\sum_{l=1}^{s-3} P_l(\varepsilon^{*} \kappa_{*}) h_\infty(\lambda_1, ... , \lambda_l)\big|_{\lambda_1 = ... =\lambda_l =0 } + R_s,
$$
where $P_l(\varepsilon^{*} \kappa_{*})$ is defined in~\eqref{Edgeworthp} with
$\varepsilon^{*} =  (\varepsilon^3, \dots , \varepsilon^{l+2}), \kappa_{*} = (\kappa_3, ... , \kappa_{l+2})$ and
$$
|R_s| \le c_s \cdot B \cdot |\varepsilon|^{s}.
$$
with some absolute constant $c_s$.
\end{theorem}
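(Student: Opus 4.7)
The plan is to follow the three-step template of Theorem~\ref{th_BE}, but to retain Taylor contributions up to order $s$ rather than truncating at order $3$. \textbf{First}, I would perform the block replacement: substitute each argument $\varepsilon_j$ by a block of $k$ identical copies of $\varepsilon_j/\sqrt k$, expanding $h$ to order $s$ via Taylor's formula at the expansion point. Condition \eqref{cond_2} forces every multi-index appearing with nonzero coefficient to have all its nonzero entries $\geq 2$, and \eqref{cond_3} allows aggregation by the multiset of nonzero exponents $(p_1,\ldots,p_m)$. A term corresponding to such a multiset scales as $k^{-\sum(p_i-2)/2}$ times a product of power sums $\varepsilon^{p_i}$, so the grading $\Deg(\varepsilon^d)=d-2$ introduced in the Algebraic Representation remark selects precisely the finitely many multisets with $\sum(p_i-2)\leq s-2$ as surviving to the expansion.

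\textbf{Second}, after the block substitution and using \eqref{cond_1} to write $D^\alpha h_{nk}(\mathbf{0}) = D^{\alpha'}h_{|\supp\alpha|}(\mathbf{0})$, I would collect the Taylor terms by exponent multiset. The combinatorial identification with the Edgeworth polynomials $P_l(\varepsilon^*\kappa_*)$ is then the moment-to-cumulant passage encoded in the defining identity
$$
\sum_{p\geq 2}\frac{1}{p!}\varepsilon^p\kappa_p(D) \;=\; \ln\!\left(1+\sum_{p\geq 2}\frac{1}{p!}\varepsilon^p D^p\right).
$$
Summed over the $n$ blocks and exponentiated, the left-hand side produces $\exp\bigl(\sum_{p\geq 3}\frac{1}{p!}\varepsilon^p\kappa_p(D)\bigr)$ on the sphere $|\varepsilon|_2=1$; this is exactly the generating series \eqref{Edgeworthp} for the Edgeworth polynomials. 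Thus the surviving Taylor contributions assemble into $\sum_{l=1}^{s-3}P_l(\varepsilon^*\kappa_*)$ acting on auxiliary functions associated with the block structure.

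\textbf{Third}, by the definition \eqref{limit function}, for any fixed multi-index the derivatives $D^\alpha h_{k+m}(\lambda_1,\ldots,\lambda_m,1/\sqrt k,\ldots,1/\sqrt k)$ converge to $D^\alpha h_\infty(\lambda_1,\ldots,\lambda_m)$ (the uniform derivative bound legitimizes interchanging $\lim_{k\to\infty}$ with $D^\alpha$); hence the coefficients produced in the second step can be identified with $D^\alpha h_\infty|_{\lambda=0}$. The remainder $R_s$ then collects the Taylor tail of order $\geq s$, the block-replacement error from the first step, and the $O(k^{-1/2})$ terms from the cumulant-exponential manipulation that vanish as $k\to\infty$; each of these is bounded, via the derivative hypothesis, by a constant times $B$ times a power sum of $|\varepsilon_j|^{d}$ with $d\geq s$, producing the stated bound $|R_s|\leq c_s\cdot B\cdot|\varepsilon|^s$.

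\textbf{Main obstacle.} The principal difficulty is the combinatorial bookkeeping in the second step: showing that the Taylor coefficients generated by the symmetric-function expansion, with the block structure imposed by the first step, reorganize precisely into the cumulant-differential combinations prescribed by \eqref{Edgeworthp}. This is the operator-valued analogue of the classical relation between moment and cumulant generating functions, and requires careful tracking of the factors $k^{1-p/2}$ (which vanish for $p\geq 3$ in the limit but whose leading behaviour at finite $k$ encodes the Edgeworth corrections) across blocks and across permutations of the expansion variables. Once this algebraic identity is established, the analytic estimates for $R_s$ proceed exactly as in the three steps of the proof of Theorem~\ref{th_BE}, now at higher Taylor order.
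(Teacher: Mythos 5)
Your skeleton (block replacement, Taylor expansion to order $s$, moment-to-cumulant reorganization, passage to the limit $k\to\infty$) matches the paper's strategy, but there is a genuine gap at the point where you write that ``the coefficients produced in the second step can be identified with $D^\alpha h_\infty|_{\lambda=0}$'' by a simple limit argument. After the Taylor step, the coefficients you must control are derivatives $D^\alpha$ of $h$ evaluated at a point that still carries \emph{all the remaining non-identical weights} $\varepsilon_2,\ldots,\varepsilon_n$ (they are of the form $D^{\alpha}h_{n+k-1}(\varepsilon_1,0,\ldots,0,\varepsilon_2,\ldots,\varepsilon_n)$), not at a point of the form $(\lambda_1,\ldots,\lambda_m,1/\sqrt k,\ldots,1/\sqrt k)$ to which the definition \eqref{limit function} applies. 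Replacing such a coefficient by $D^\alpha h_\infty|_{\lambda=0}$ via the first-order approximation (Theorem~\ref{th_BE} applied to the derivative function) costs an error of order $|\varepsilon|^3$, and after multiplication by the accompanying Taylor monomial this does \emph{not} yield a remainder of order $|\varepsilon|^s$ once $s\ge 5$. What is needed --- and what the paper supplies --- is an \emph{induction on the length of the expansion}: the hypothesis \eqref{induction_step} asserts that the derivatives $D^\alpha h_n$ themselves admit Edgeworth expansions $\sum_{j=0}^{l-3}P_j(\ve^{*}\kappa_{*})h_\infty+R_{1,l}$ of the complementary order, and these lower-order expansions are substituted for the Taylor coefficients. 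This is why products $\tilde P_{r_1}\cdots\tilde P_{r_m}P_{r_0}$ appear in the paper's proof and why the auxiliary operators $\tilde P_r$ (Lemma~\ref{l_polynomials} and the Statement relating $P_r$ to $\tilde P_r$) are introduced at all. Your proposal has no mechanism for expanding the coefficients to the required depth, so the remainder bound $|R_s|\le c_s\cdot B\cdot|\varepsilon|^s$ cannot be reached.

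A second, smaller omission: you do not explain how the per-coordinate contributions recombine into a single polynomial $P_r(\varepsilon^{*}\kappa_{*})$ after all $n$ replacements. In the paper this is the telescoping identity
\begin{equation*}
\sum_{j=1}^n \left[ P_r\bigl((\varepsilon_{[j:n]}^{*}-\varepsilon_j^{*})\kappa_{*}\bigr)-P_r\bigl(\varepsilon_{[j:n]}^{*}\kappa_{*}\bigr)\right]=-P_r(\varepsilon^{*}\kappa_{*}),
\end{equation*}
which rests on the convolution property $\sum_{r+q=k}P_r(\tau_{*}\kappa_{*})P_q(\tau_{*}'\kappa_{*})=P_k((\tau_{*}+\tau_{*}')\kappa_{*})$. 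You gesture at the generating-function identity for a single block, but the cross-block cancellation is a separate and necessary step; without it one obtains a sum of $n$ different Edgeworth-type corrections rather than the stated single one.
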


As an example consider the case $s = 5$. Then by~\eqref{P2}
\begin{align}\label{eq: expansion5}
&h_n(\varepsilon_1, ... , \varepsilon_n)= h_\infty + \frac{\ve^3}{6} \frac{\partial^3}{\partial \lambda^3} h_\infty(\lambda)\big|_{\lambda=0}  \\
&+\left [ \frac{\varepsilon^4}{24} \left ( \frac{\partial^4}{\partial \lambda_1^4} - 3 \frac{\partial^2}{\partial \lambda_1^2}\frac{\partial^2}{\partial \lambda_2^2} \right )
+ \frac{(\varepsilon^3)^2}{72}\frac{\partial^3}{\partial \lambda_1^3}\frac{\partial^3}{\partial \lambda_2^3} \right ] h_\infty(\lambda_1, \lambda_2)\big|_{\lambda_1=0, \lambda_2 = 0} + \mathcal O(|\varepsilon|^5). \nonumber
\end{align}

Before we start  proving  Theorem~\ref{asymptotic_expansion} we have to introduce one more notation. For any sequence $\tau_p, p \geq 1$, of
formal variables define $\tilde P(\tau_{*} \kappa_{*})$ as a polynomial in the cumulant operators $\kappa_p$ multiplied by $\tau_p$ by the following formal power series in $\mu$:
\begin{equation}\label{tildeP}
\sum_{j=0}^{\infty} \tilde P_j(\tau_{*}\kappa_{*}) \mu^j : = \exp \left(\sum_{j=2}^{\infty} j!^{-1} \tau_j \kappa_j(D) \mu^j \right).
 \end{equation}
For example, $\tilde P_0 = 1, \tilde P_1 = 0, \tilde P_2 = \frac{1}{2} \tau_2 D^2, \tilde P_3 = \frac{1}{6} \tau_3 D^3$, and
$$
\tilde P_4 = \frac{1}{24} \tau_4 (D^4 - 3 D^2 D^2) + \frac{1}{8} \tau_2^2 D^2 D^2.
$$
If $\tau_p = \tau^p, p \geq 1$ then
\begin{equation}\label{eq: tilde P relations 1}
\tilde P_j = j!^{-1} \tau_j D^j.
\end{equation}
Furthermore, one may verify that functional identities of $\exp$ in \eqref{tildeP} yield the identities
\begin{equation}\label{eq: tilde P relations 2}
\sum_{j+l = r} \tilde P_j (\tau_{*} \kappa_{*})\tilde P_j (\tau_{*}' \kappa_{*}) = \tilde P_r ((\tau_{*} + \tau_{*}') \kappa_{*}) .
\end{equation}
There is a relation between the Edgeworth polynomials $P_r(\cdot)$ and $\tilde P_r(\cdot)$ which may be expressed  in the following relation.
\begin{statement} We have
\begin{equation}\label{relationbetweenpolinomial}
\sum_{r=1}^{\infty} [P_r(\tau_{*} \kappa_{*})]_l = \sum_{r=1}^l \tilde P_r(\tau_{*} \kappa_{*}),
\end{equation}
where $[\cdot]_l$  denotes the sum of all monomials $\tau_{1}^{p_1} \cdot \cdot \cdot \tau_{r+2}^{p_r + 2}$ in $P_r(\tau_{*} \kappa_{*})$ such that $p_1 + 2p_2 + ... + (r+2) p_{r+2} \le l$.
\end{statement}
We shall use~\eqref{relationbetweenpolinomial} in the proof of Theorem~\ref{asymptotic_expansion}. The following Lemma allows us to rewrite the derivatives of $h_n(\varepsilon_1, ... , \varepsilon_n)$
via
derivatives in additional variables using the definition of $\tilde P_r$.
\begin{lemma}\label{l_polynomials}
Suppose that the conditions~\eqref{cond_1}, \eqref{cond_2} and \eqref{cond_3}  hold. Then
\begin{align*}
&\sum_{j=2}^m \frac{1}{j!} \frac{\partial}{\partial \varepsilon^j} h_n(\varepsilon, \varepsilon_2, ... , \varepsilon_n)\big|_{\varepsilon = 0} (\eta^j - \varepsilon^j) \\
&=\sum_{r=2}^m \tilde P_r((\eta^{*} - \varepsilon^{*})\kappa_{*}(D))h_{n+m}  (\lambda_1, .. , \lambda_k, \varepsilon, \varepsilon_2, ... , \varepsilon_n ) \big |_{\lambda_1 = ... = \lambda_m = 0} + \mathcal O \left (\varepsilon^{m} \right ).
\end{align*}
\end{lemma}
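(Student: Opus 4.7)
The proof starts by unpacking the LHS via Taylor's formula applied to $h_n$ in its first argument around zero. Conditions \eqref{cond_1} and \eqref{cond_2} kill the $0$-th and first-order contributions (giving $h_n(0,\ve_2,\ldots)=h_{n-1}(\ve_2,\ldots)$ and $\partial_\varepsilon h_n(0,\ve_2,\ldots)=0$), so that subtracting the degree-$m$ Taylor polynomials at $\eta$ and at $\varepsilon$ identifies the LHS with $h_n(\eta,\ve_2,\ldots,\ve_n)-h_n(\varepsilon,\ve_2,\ldots,\ve_n)$ up to a remainder of order $|\eta|^{m+1}+|\varepsilon|^{m+1}$, which is absorbed into the advertised $\mathcal O(\varepsilon^m)$ when $\eta$ and $\varepsilon$ are of comparable size.

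For the RHS, the plan is to exploit the generating-function definition \eqref{tildeP} together with the identities \eqref{eq: tilde P relations 1} and \eqref{eq: tilde P relations 2}. Because the cumulant operators $\kappa_j(D)$ commute as formal polynomial variables, the multiplicative property of the exponential yields
\[
\sum_{r\ge 0}\tilde P_r\bigl((\eta^*-\varepsilon^*)\kappa_*\bigr)=\Bigl[\sum_{j\ge 0}\tilde P_j(\eta^*\kappa_*)\Bigr]\cdot\Bigl[\sum_{l\ge 0}\tilde P_l(-\varepsilon^*\kappa_*)\Bigr].
\]
By \eqref{eq: tilde P relations 1} the first factor collapses to $1+\sum_{p\ge 2}p!^{-1}\eta^p D^p$ acting in a single fresh $\lambda$-slot, while the second factor is the formal Neumann inverse of $1+\sum_{p\ge 2}p!^{-1}\varepsilon^p D^p$. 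Applying this composite operator to $h_{n+m}(\lambda_1,\ldots,\lambda_m,\varepsilon,\ve_2,\ldots)$ and evaluating at $\lambda=0$, Taylor's formula in the $\lambda$-slots together with \eqref{cond_1}--\eqref{cond_3} converts the forward factor into substitution of $\eta$ into one slot, while the Neumann expansion of the inverse (term by term, each $D^p$ using its own fresh slot and \eqref{cond_3} used to relabel) cancels the $\varepsilon$-dependence of that slot up to terms whose total degree in $\eta,\varepsilon$ exceeds $m$. The net contribution from the $r\ge 2$ part is exactly $h_n(\eta,\ve_2,\ldots)-h_n(\varepsilon,\ve_2,\ldots)$, matching Step~1; truncating the sum at $r=m$ introduces only $\mathcal O(\varepsilon^m)$ because every discarded monomial of $\tilde P_r$ with $r>m$ carries a product $\prod_i(\eta^{p_i}-\varepsilon^{p_i})$ with $\sum_i p_i>m$, multiplied by a derivative of $h_{n+m}$ that is bounded by the standing hypothesis.

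The main technical obstacle is executing the Neumann cancellation cleanly: one must keep careful bookkeeping of which $\lambda$-slot each $D^p$ occupies, repeatedly apply \eqref{cond_1} to shrink $h_{n+k}(\lambda_1,\ldots,\lambda_k,\varepsilon,\ldots)\big|_{\lambda=0}$ whenever slots go to zero, and invoke \eqref{cond_2} to annihilate first-order contributions. A useful simplification is to check the identity first at $\varepsilon=0$, where both sides reduce to the plain Taylor expansion of $h_n$ in its first argument (the cumulant sums then collapsing by \eqref{eq: tilde P relations 1}), and then to recover the general case by a further Taylor expansion in $\varepsilon$ organised through the cumulant structure so that the remainder is of the stated order; the relation \eqref{relationbetweenpolinomial} between $P_r$ and $\tilde P_r$ can also be used to streamline the combinatorics.
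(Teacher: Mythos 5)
Your argument is correct and is essentially the paper's own proof: both rest on the convolution identity \eqref{eq: tilde P relations 2} to play $\tilde P_r((\eta^{*}-\varepsilon^{*})\kappa_{*})$ off against the Taylor expansion $\sum_l\tilde P_l(\varepsilon^{*}\kappa_{*})$ of the $\varepsilon$-slot, and on the collapse \eqref{eq: tilde P relations 1} to turn the resulting $\tilde P_j(\eta^{*}\kappa_{*})-\tilde P_j(\varepsilon^{*}\kappa_{*})$ into $j!^{-1}(\eta^j-\varepsilon^j)D^j$, identified with the $\varepsilon$-derivatives of $h_n$ via \eqref{cond_1}--\eqref{cond_3}. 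Your phrasing of the $-\varepsilon^{*}$ factor as a Neumann inverse that strips the $\varepsilon$-slot, and the detour through $h_n(\eta,\varepsilon_2,\dots,\varepsilon_n)-h_n(\varepsilon,\varepsilon_2,\dots,\varepsilon_n)$, are only cosmetic reorganizations of the same computation.
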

\begin{proof}
This relation  is a consequence of the following simple computations:
\begin{align*}
&\sum_{r = 1}^m \tilde P_r((\eta^{*} - \varepsilon^{*}) \kappa_{*}(D)) h_{m+n} (\underbrace{0, ... , 0}_\text{$m$}, \varepsilon, \varepsilon_2, ... , \varepsilon_n) \\
&\overset{\eqref{eq: tilde P relations 1}}{=}\sum_{j=1}^m \sum_{l+r = j \atop r \geq 1} \tilde P_r((\eta^{*} - \varepsilon^{*}) \kappa_{*}(D)) \tilde P_r(\varepsilon^{*} \kappa_{*}(D)) \\
&\qquad\qquad\qquad\times h_{m+n} (\underbrace{0, ... , 0}_\text{$m+1$}, \varepsilon_2, ... , \varepsilon_n) + \mathcal O \left (\varepsilon^{m} \right ). \\
& \overset{\eqref{eq: tilde P relations 2}}{=} \sum_{j=1}^m (\tilde P_r(\eta^{*} \kappa_{*}(D)) -  \tilde P_r(\varepsilon^{*} \kappa_{*}(D))  h_{m+k} (\underbrace{0, ... , 0}_\text{$m+1$}, \varepsilon_2, ... , \varepsilon_n) + \mathcal O \left (\varepsilon^{m} \right ). \\
&\overset{\eqref{eq: tilde P relations 1}}{=}\sum_{j=2}^m \frac{1}{j!} \frac{\partial}{\partial \varepsilon^j} h_n(\varepsilon, \varepsilon_2, ... , \varepsilon_n)\big|_{\varepsilon = 0} (\eta^j - \varepsilon^j) + \mathcal O \left (\varepsilon^{m} \right ).
\end{align*}
\end{proof}
\begin{proof}[Proof of Theorem~2.2]
We prove this theorem by  induction on the length of the expansion. Consider the difference
$$
h_n(\varepsilon_1, ... , \varepsilon_n) - h_{nk}\left( \frac{\varepsilon_1}{\sqrt k}, ... , \frac{\varepsilon_1}{\sqrt k} , ... , \frac{\varepsilon_n}{\sqrt k}, ..., \frac{\varepsilon_n}{\sqrt k}  \right)
$$
and divide it into a sum of $n$ terms, replacing  successively the argument $\varepsilon_j, j=1,\ldots,n$ by the $k$-vector $(k^{-1/2} \varepsilon_j, ... , k^{-1/2} \varepsilon_j)$, similar  to the arguments used in the previous Theorem~\ref{th_BE}.
We start with the case $j=1$
and denote $h_k(\delta_1, ... , \delta_k): = h_{n+k-1}(\delta_1, ... , \delta_k, \varepsilon_2, ... ,\varepsilon_n)$. Set
\begin{align*}
&\underline \delta_{k} : = (\delta_1, ... , \delta_{k}) := \left (\frac{\varepsilon_1}{\sqrt k}, ... , \frac{\varepsilon_1}{\sqrt k} \right ), \quad \underline \delta_{k}^0 : = (\delta_1^0, ..., \delta_k^0):= (\varepsilon_1, 0, ... , 0).
\end{align*}
A Taylor expansion  yields
\begin{align}\label{fistroundofTS}
h_{k}(\underline \delta_k) - h_{k}(\underline \delta_k^0) = \sum_{0 < |\alpha| < s} \alpha!^{-1}D^{\alpha} h_{k}(\underline \delta_k^0)((\underline \delta_k - \underline \delta_k^0)^\alpha + R_{1,s}, \nonumber
\end{align}
with a remainder term $R_{1,s}$ such that  $|R_{1,s}| \le c_s \cdot B \cdot |\varepsilon_1|^{s}$. To simplify our notations we introduce  the following convention.  By $R_l, l \geq 1$ we shall denote a remainder term which is of order
$\mathcal O(|\varepsilon|^l)$ omitting the explicit dependence on $B$. By $R_{1,l}, l \geq 1$ we shall denote a remainder term of order $\mathcal O(|\varepsilon_1|^l)$.\\
In order to use  condition~\eqref{cond_2} we expand each derivative $D^{\alpha} h_{k}(\underline \delta_k^0)$, where $\alpha = (\alpha_{j_1}, ... , \alpha_{j_p}), 1 \le j_1 \le ... \le j_p \le k$,
around $\delta_{j_r}=0, r = 1, ... , p$. This yields
\begin{align}
D^{\alpha} h_{k}(\underline \delta_k^0) = \sum_{0 < |\alpha|+|\beta| < s} \beta!^{-1} D^{\alpha+\beta} h_{k}(\underline \delta_k^0)(\underline \delta_k^0)^\beta + R_{1,s},
\end{align}
The binomial formula implies
$$
\sum_{j+k=r, j \geq 1} \frac{1}{j!\cdot k!}(\varepsilon - \eta)^j \eta^k = \frac{1}{r!}(\varepsilon^r - \eta^r).
$$
Applying this relation to~\eqref{fistroundofTS} we get
\begin{align*}
h_{k}(\underline \delta_k) - h_{k}(\underline \delta_k^0) =\sum_{0 < |\gamma| < s} \gamma!^{-1} D^{\gamma} h_k(0, ... , 0) \prod_{i=1}^k [\delta_i^{\gamma_i} - (\delta_i^0)^{\gamma_i} ] + R_{1,s}.
\end{align*}
In order to use the induction assumption which will be formulated later in terms of the function values $h_k(\underline \delta_k)$, we have to use expansions and derivatives in  additional variables at zero.

Introduce for $j = 1, \dots, k$ and $p = 3, \dots, s-1,$
$$
\Delta_j^p := \delta_j^p - \left(\delta_j^0 \right)^p \,\, \text{and} \,\, \Delta_j^* := (\Delta_j^3, \dots, \Delta_j^{s-1}).
$$
Applying Lemma~\ref{l_polynomials} we get
\begin{align}\label{finaltaylor}
&h_{k}(\underline \delta_k) - h_{k}(\underline \delta_k^0) =\sum \tilde P_{r_1}(\Delta_{j_1}^{*} \kappa_{*}) \cdot \cdot \cdot \tilde P_{r_m}(\Delta_{j_m}^{*} \kappa_{*}) h_{k + s}(\underline \delta_k, 0, ... , 0) + R_{1,s},
\end{align}
where the sum extends over all combination of $r_1, ... , r_m \geq 2, m = 1, 2, ... $, such that $r_1 + ... + r_m < s$ and all ordered $m$ - tuples of positive induces $1 \le j_r \le m$ without repetition.
Assume that for $l = 3, ... , s-1$ we have already proved  that
\begin{align}\label{induction_step}
D^{\alpha} h_n(\varepsilon_1, ... , \varepsilon_n) = \sum_{j = 0}^{l-3} P_j(\ve^{*} \kappa_{*}) h_{\infty} + R_{1,l}.
\end{align}
We start with the case $l = 3$, which follows from Theorem~\ref{th_BE}.  Applying the induction assumption~\eqref{induction_step} to~\eqref{finaltaylor} we get
\begin{align*}
h_{k}(\underline \delta_k) - h_{k}(\underline \delta_k^0)= \sum \tilde P_{r_1}(\Delta_{j_1}^{*} \kappa_{*}) \cdot \cdot \cdot \tilde P_{r_k}(\Delta_{j_k}^{*} \kappa_{*}) P_{r_0}(\ve^{*} \kappa_{*} ) h_{\infty} + R_{1,s},
\end{align*}
where the sum extends over all indices $r_1, ... , r_m \geq 1, r_0 \geq 0$, such that $r_0 + r_1 + ... + r_m \le s$. We shall rewrite this relation as follows
\begin{align}\label{square_brack}
h_{k}(\underline \delta_k) - h_{k}(\underline \delta_k^0) = \sum_{r_0 = 0}^{s- 4} P_{r_0}(\ve^{*} \kappa_{*} ) \sum_{j} \left [\prod_{l=1}^{s-r_0} \left [ \sum_{v_l = 1}^s  \tilde P_{v_l}(\Delta_{j_l}^{*} \kappa_{*}) \right ] \right ]_{s-r_0} h_{\infty} + R_{1,s},
\end{align}
where $[ \,\, ]_r$ denotes all terms of the enclosed formal power series which are proportional to monomials $\Delta_{j_1}^{r_1}\dots \Delta_{j_1}^{r_1}$ with $r_1 + \dots + r_k \leq r$ and $k \leq m$. The right hand side of ~\eqref{square_brack} may be expressed as
\begin{align}\label{exp1}
\sum_{r_0 = 0}^{s- 4} P_{r_0}(\ve^{*} \kappa_{*}) \left [ \exp \left [ \sum_{p=2}^\infty \left ( \sum_{ j = 1}^k \Delta_j^p \right ) p!^{-1} \kappa_p \right ] - 1 \right ]_{s-r_0} h_\infty + R_{1,s}
\end{align}
It is easy to see that
$$
\sum_{ j = 1}^k \Delta_j^2  = \varepsilon_1^2 \left [ \left( \frac{1}{k} - 1 \right ) + \frac{k-1}{k} \right ] = 0
$$
and
\begin{equation}\label{Delta_P}
\sum_{ j = 1}^k \Delta_j^p  = \varepsilon_1^p \left [ \left( \frac{1}{k^{p/2}} - 1 \right ) + \frac{k-1}{k^{p/2}} \right ] = -\varepsilon_1^p + \mathcal O \left (\frac{|\varepsilon_1|^p}{k^{p/2}} \right), \quad p > 2.
\end{equation}
Using~\eqref{relationbetweenpolinomial} we get
\begin{align}
&P_{r_0}(\ve^{*} \kappa_{*}) \left [ \exp \left [ \sum_{p=2}^\infty \left ( \sum_{ j = 1}^k \Delta_j^p \right ) p!^{-1} \kappa_p \right ] - 1 \right ]_{s-r_0} h_\infty  \nonumber\\
&=P_{r_0}(\ve^{*} \kappa_{*}) \sum_{i = 3}^{s-r_0-1} \tilde P_i \left (\left ( \sum_{ j = 1}^k \Delta_j^{*}\right )\kappa_{*} \right ) h_{\infty} \nonumber\\
\label{exp2}
&= P_{r_0}(\ve^{*} \kappa_{*}) \sum_{i = 1}^{\infty} \left [ P_i \left ( \left ( \sum_{ j = 1}^k \Delta_j^{*} \right ) \kappa_{*} \right ) \right ]_{s-r_0-1} h_{\infty}
\end{align}
and
\begin{align}\label{deletebracket}
P_{r_0}(\ve^{*} \kappa_{*})  P_r \left ( \sum_{j=1}^k \Delta_j^{*} \kappa_{*} \right ) h_{\infty} = P_{r_0}(\ve^{*} \kappa_{*}) \left [P_r \left ( \sum_{j=1}^k \Delta_j^{*} \kappa_{*} \right ) \right ]_{s-r_0-1} h_{\infty} + R_{1,s}
\end{align}
By~\eqref{exp2}--\eqref{deletebracket} we may rewrite~\eqref{exp1} in the following way
$$
h_{k}(\underline \delta_k)- h_{k}(\underline \delta_k^0) = \sum_{r_0 = 0}^{s-4} P_{r_0}(\ve^{*} \kappa_{*}) \sum_{r = 1}^{s-3-r_0} P_r\left ( \sum_{j=1}^k \Delta_j^{*} \kappa_{*} \right ) h_{\infty} + R_{1,s}.
$$
Since
$$
\sum_{r+q = k} P_r(\tau_{*} \kappa_{*}) P_q(\tau_{*}' \kappa_{*}) = P_k((\tau_{*} + \tau_{*}') \kappa_{*}), \quad q,r,k \geq 0.
$$
we conclude
\begin{align*}
h_{k}(\underline \delta_k)- h_{k}(\underline \delta_k^0) = \sum_{r = 1}^{s-3} \left [ P_r ((\ve^{*} - \varepsilon_1^{*}) \kappa_{*} ) - P_r(\ve^{*} \kappa_{*}) \right ]h_{\infty}(\lambda_1, ... , \lambda_s)\big|_{\lambda_1 = ... = \lambda_s = 0} + R_{1,s},
\end{align*}
where we have used~\eqref{Delta_P}.

Thus, we replaced $\varepsilon_1$  by $(\varepsilon_1/\sqrt{k}, \dots, \varepsilon_1/\sqrt{k})$ and found a corresponding expansion. We shall now repeat the same procedure for the remaining  $\varepsilon_j$.

For all $j \geq 2$ we replace  $\varepsilon_j$ by $(k^{-1/2} \varepsilon_j, ... , k^{-1/2} \varepsilon_j)$.
It is easy to see that replacing $h_n(\varepsilon_1, ... , \varepsilon_n)$ by  $h_{n+k-1}(\underline \delta_{k}, \varepsilon_2, ... ,\varepsilon_n)$ with
$$
\underline \delta_{k} : = (\delta_1, ... , \delta_{k}) = \left (\frac{\varepsilon_1}{\sqrt k}, ... , \frac{\varepsilon_1}{\sqrt k} \right )
$$
we can use~\eqref{induction_step} with $(\underline \delta_{k}, \varepsilon_{[2:n]})$ instead of
  $\ve$, where $\varepsilon_{[2:n]}: = (\varepsilon_2, ... , \varepsilon_n)$. The function $h_{\infty}$
will be the same  in both expansions since it depends  on  $|\varepsilon|_2$ only  and
$|\underline \delta_k|^2 + |\varepsilon_{[2:n]}|^2 = |\varepsilon|_2$. The same arguments  may be applied for all $j \geq 2$. Hence, repeating this procedure $n-1$ times we get
\begin{align*}
&h_{nk}\left( \frac{\varepsilon_1}{\sqrt k}, ... , \frac{\varepsilon_1}{\sqrt k} , ... , \frac{\varepsilon_n}{\sqrt k}, ..., \frac{\varepsilon_n}{\sqrt k}  \right) - h_{k}(\varepsilon_1, ... , \varepsilon_n) \\
&= \sum_{j=1}^n\sum_{r = 1}^{s-3} \left [ P_r((\varepsilon_{[j:n]}^{*} - \varepsilon_j^{*}) \kappa_{*}) - P_r(\varepsilon_{[j:n]}^{*} \kappa_{*}) \right ]h_{\infty}(\lambda_1, ... , \lambda_s)\big|_{\lambda_1 = ... = \lambda_s = 0} + R_s,
\end{align*}
where $\varepsilon_{[j:n]}:=(\varepsilon_{j}, ... ,\varepsilon_n)$.

To finish the proof we need to show that for fixed $r$ the following identity holds
\begin{align}\label{eq: finish of the proof}
\sum_{j=1}^n \left [ P_r((\varepsilon_{[j:n]}^{*}  - \varepsilon_j^{*}) \kappa_{*}) - P_r(\varepsilon_{[j:n]}^{*} \kappa_{*}) \right ] = - P_r(\varepsilon^{*} \kappa_{*}).
\end{align}
The relation~\eqref{eq: finish of the proof} follows from the following simple observation. Let $m \geq 1$ be a fixed integer and $(j_1, ... , j_m)$ be a vector of positive numbers such that $j_1 + ... + j_m = r$. Then
\begin{align*}
&\sum_{i=1}^n (\varepsilon_{[i:n]}^{j_1 + 2} - \varepsilon_i^{j_1+2})\cdot \cdot \cdot ({\varepsilon_{[i:n]}}^{j_m + 2} - \varepsilon_i^{j_m+2})
-\sum_{i=1}^n \varepsilon_{[i:n]}^{j_1 + 2} \cdot \cdot \cdot {\varepsilon_{[i:n]}}^{j_m + 2}\\
& = -{\varepsilon}^{j_1+2} \cdot \cdot \cdot  {\varepsilon}^{j_m+2}.
\end{align*}
For a proof it is enough to note that for all $i \geq 1$
$$
(\varepsilon_{[i:n]}^{j_1 + 2} - \varepsilon_i^{j_1+2})\cdot \cdot \cdot ({\varepsilon_{[i:n]}}^{j_m + 2} - \varepsilon_i^{j_m+2})=
{\varepsilon_{[i+1:n]}}^{j_1 + 2} \cdot \cdot \cdot {\varepsilon_{[i+1:n]}}^{j_m + 2}.
$$
Applying~\eqref{eq: finish of the proof} we arrive at
\begin{align*}
&h_{n}(\varepsilon_1, ... , \varepsilon_n) - h_{nk}\left( \frac{\varepsilon_1}{\sqrt k}, ... , \frac{\varepsilon_1}{\sqrt k} , ... , \frac{\varepsilon_n}{\sqrt k}, ..., \frac{\varepsilon_n}{\sqrt k}  \right) \\
&= \sum_{r = 1}^{s-3} P_r(\ve^{*} \kappa_{*})h_{\infty}(\lambda_1, ... , \lambda_s)\big|_{\lambda_1 = ... = \lambda_s = 0} + R_s.
\end{align*}
Repeating now the last two steps in the proof of the previous Theorem~\ref{th_BE} and taking the limit  $k \rightarrow \infty$ we get
\begin{align*}
h_{n}(\varepsilon_1, ... , \varepsilon_n) - h_{\infty} = \sum_{r = 1}^{s-3} P_r(\ve^{*} \kappa_{*})h_{\infty}(\lambda_1, ... , \lambda_s)\big|_{\lambda_1 = ... = \lambda_s = 0} + R_s.
\end{align*}
This proves~\eqref{induction_step} for $l = s$ and $\alpha = 0$. Hence, the induction is completed and the Theorem is proved.
\end{proof}

\section{Application of Theorem~\ref{asymptotic_expansion}}
In this section we illustrate in some examples how one may apply Theorem~\ref{asymptotic_expansion} to derive an asymptotic expansion of various functions in probability theory.
\subsection{Expansion in the Central Limit Theorem for Weighted Sums}
As the first example let us consider the sequence of independent random variables $X, X_j, j \in \mathbb N$, taking values in $\R$ with a common distribution function $F$. Suppose the $\E X = 0, \E X^2 = 1$. Consider the weighted sum $S_{\varepsilon} = \varepsilon_1 X_1 + ... + \varepsilon_n X_n$. As $h_n$ we may choose the characteristic function of $S_{\varepsilon}$, i.e.,
$$
h_n(\varepsilon_1, ... , \varepsilon_n) = \E e^{i t (\varepsilon_1 X_1 + ... + \varepsilon_n X_n)}.
$$
From Theorem~\ref{th_BE} we know that $h_\infty(|\varepsilon|_2)$ exists provided that the condition~\eqref{3dmomentcondition} holds.
In our setting this condition holds when $\E |X|^3 < \infty$ . It is well known, see e.g. Ch.5 in \cite{Petrov1975}, that
$$
h_\infty(|\varepsilon|_2) = \E e^{i t G},
$$
where $G \sim N(0, |\varepsilon|_2)$. In what follows we shall assume  $|\varepsilon|_2 = 1$. The rate of convergence is given by $|\varepsilon|_3^3$.
If $\varepsilon$ is well spread, for example, when $\varepsilon_j = n^{-1/2}$ for all $1 \le j \le n$, then
\begin{equation}\label{application: Berry-Esseen bound}
|h_n(\varepsilon_1, ... , \varepsilon_n) - h_{\infty}(|\varepsilon|_2)| \le C \cdot |t|^3 \cdot \frac{E|X|^3}{\sqrt n }.
\end{equation}
Of course this bound does not hold for all $\varepsilon = (\varepsilon_1, ... , \varepsilon_n)$ on the unit  sphere $S^{n-1} = \{\varepsilon : |\varepsilon|_2 = 1 \}$. Consider a simple counter example. Let $X \sim \text{Uniform}([-\sqrt 3, \sqrt 3])$ and  $\varepsilon = e_1$.
Then $S_{\varepsilon} = X_1 \sim \text{Uniform}([-\sqrt 3, \sqrt 3])$, which is not Gaussian as $n \rightarrow \infty$.

Concerning expansions for weighted linear forms,  results of~\cite{KlarSodin2011} imply that the left hand side of~\eqref{application: Berry-Esseen bound} has order $\mathcal O(1/n)$ for a 'large' set of unit vectors $\varepsilon$.
The size of this set is measured according to the uniform probability measure, say $\sigma_{n-1}$, on the unit sphere $S^{n-1}$.

Let us now construct an asymptotic expansion using Theorem~\ref{asymptotic_expansion}. We have for any integer $s\geq 0$
$$
h_{\infty}(\lambda_1, ... , \lambda_s) = \E e^{i t (\lambda_1 X_1 + ... + \lambda_s X_s + G)}.
$$
Taking derivatives with respect to $\lambda_1, ... , \lambda_s$ at zero we get, for example, for $s\leq 2$
\begin{align*}
&\frac{\partial^3}{\partial \lambda_1^3} h_{\infty} (\lambda_1) \bigg |_{\lambda_1  = 0} = (it)^3 e^{-t^2/2} \beta_3, \\
&\frac{\partial^4}{\partial \lambda_1^4} h_{\infty} (\lambda_1) \bigg |_{\lambda_1  = 0} = (it)^4 e^{-t^2/2} \beta_4, \\
&\frac{\partial^4}{\partial \lambda_1^2 \partial \lambda_2^2} h_{\infty} (\lambda_1, \lambda_2) \bigg |_{\lambda_1  = 0, \lambda_2  = 0} = (it)^4 e^{-t^2/2} \beta_2^2,\\
&\frac{\partial^6}{\partial \lambda_1^3 \partial \lambda_2^3} h_{\infty} (\lambda_1, \lambda_2) \bigg |_{\lambda_1  = 0, \lambda_2  = 0} = (it)^6 e^{-t^2/2} \beta_3^2, \\
\end{align*}
where $\beta_2 = \E X^2 = 1, \beta_3 = \E X^3$ and $\beta_4 = \E X^4$. Substituting these equations to~\eqref{eq: expansion5} we get
\begin{align*}
h_n(\varepsilon_1, ... , \varepsilon_n) &= \E e^{i t G} + \frac{\varepsilon^3}{6} (it)^3 e^{-t^2/2} \beta_3   \\
&+\frac{\varepsilon^4}{24} [\beta_4 - 3](it)^4 e^{-t^2/2}  + \frac{( \beta_3 \varepsilon^3)^2}{72} (it)^6 e^{-t^2/2} + R_5.
\end{align*}
The expansion coincides with the well known Edgeworth expansion (involving cumulants) for characteristic functions of sums of random variables, see, e.g., \S~1, Ch.~6, in~\cite{Petrov1975}. It coincides as well with Edgeworth expansions for  expectations of smooth functions of sums of random vectors in Euclidean, resp. Banach spaces, see e.g., \cite{GotzeHipp1978} resp.  \cite{Gotze1989}.

Let us concentrate now on the so-called short asymptotic expansion
\begin{align}\label{application: short asymptotic expansion}
h_n(\varepsilon_1, ... , \varepsilon_n) = \E e^{i t G} + \frac{\varepsilon^3}{6} (it)^3 e^{-t^2/2} \beta_3 + R_4,
\end{align}
where
$$
|R_4|  \le C  \cdot |t|^4 \cdot \sum_{k=1}^n \varepsilon_k^4.
$$
It follows from~\cite[Lemma~4.1]{KlarSodin2011} that for some constants $C_1$ and $C_2$ and for all\\
$\rho : 1 > \rho > \exp (-C_1\,n)$ there exists a subset $B \subset S^{n-1}$ such that for any $\varepsilon \in B$ one has
\begin{align}\label{application: sphere prop}
\bigg |\sum_{k=1}^n \varepsilon_k^3 \bigg | \le \left(\log \frac 1 {\rho}\right)^2\, \frac{C_2}{n} \text{ and } \sum_{k=1}^n \varepsilon_k^4 \le \left(\log \frac 1 {\rho}\right)^2\, \frac{C_2}{n}
\end{align}
and $\sigma_{n-1}(B) \geq 1 - \rho$ for the uniform probability measure
$\sigma_{n-1}$  on the unit sphere $S^{n-1}$.

Thus combining~\eqref{application: short asymptotic expansion} and~\eqref{application: sphere prop}  we get
 for any $\rho : 1 > \rho > 0,$ and all $\varepsilon \in B$
\begin{equation}\label{KSbound}
|h_n(\varepsilon_1, ... , \varepsilon_n) - \E e^{i t G}| \leq C\,(|t|^3 + t^4)\,\left(\log \frac 1 {\rho}\right)^2\,
\frac {\beta_4}{n}
\end{equation}
for some constant $C$ (cf.~\eqref{application: Berry-Esseen bound}).

This property may be generalized to arbitrary functions $h_n(\varepsilon_1, ..., \varepsilon_n)$
which satisfies the conditions of Theorem~\ref{asymptotic_expansion}.

Extending this example it is possible to apply our result for asymptotic expansion in the central limit theorem for quadratic forms in sums of random elements with values in a Hilbert space including infinite dimensional cases,
see, e.g.~\cite{BhatRang2010},~\cite{Gotze2004},~\cite{GotzeZaitsev2014},~\cite{ProkhUlyanov2013},~\cite{ulyanov1986} and~\cite{ulyanov1987}.

Moreover, our result could be helpful in study of asymptotic expansions for the functionals of weighted sums of {\it dependent} random variables.

Let $X_1, \dots , X_n$ be identically distributed symmetric random variables and $\delta_1, \dots , \delta_n$ be independent Rademacher random variables, i.e. $\delta_i$ takes values $1$ and $-1$ with probabilities $1/2$. Assume that $\delta_1, \dots , \delta_n$ are independent of $X_1, \dots , X_n$. We emphasize that here it is not necessary that $X_1, \dots , X_n$ are independent. In order to in construct asymptotic expansions for   ${\E}F(\varepsilon_1\, X_1 + \dots + \varepsilon_n\, X_n)$ with some smooth measurable function $F$,  note that
$$
F(\varepsilon_1\, X_1 + \dots + \varepsilon_n\, X_n) \overset{d}{=} F(\varepsilon_1\, \delta_1\,  X_1 + \dots + \varepsilon_n\, \delta_n \, X_n),
$$
where $\overset{d}{=}$ denotes equality in distribution.  Consider functions
$$
h_n(\varepsilon_1,\ldots,\varepsilon_n)= {\E} F(\varepsilon_1\, \delta_1\,  X_1 + \dots + \varepsilon_n\, \delta_n \, X_n).
$$
The function $h_n$ satisfies the conditions~\eqref{cond_1}--\eqref{cond_3} provided $F$ is sufficiently smooth.

For instance, we can take for $i=1, \dots, n$ 
$$
X_i = \frac{Y_i}{\sqrt{\varepsilon_1^2\, Y_1^2 + \dots + \varepsilon_n^2\, Y_n^2}},
$$
where $Y_1, \dots , Y_n$ are independent random variables with common symmetric distribution. Then  $F(\varepsilon_1\, X_1 + \dots + \varepsilon_n\, X_n)$ is a function of a self-normalized weighted sum, see e.g. \cite{JingWang2010}.

On the other hand, in the special case $\varepsilon_1 = \dots = \varepsilon_n = 1/\sqrt{n}$ we may consider $F(X_1/\sqrt{n} + \dots +  X_n/\sqrt{n})$ as a function of exchangeable random variables, see e.g. \cite{GoetzeBloznelis2002}. 

\subsection{Expansion in the Free Central Limit theorem}
It has been shown in a recent paper~\cite{GotzeReshetenko2014} that one may apply the results of Theorem~\ref{asymptotic_expansion} in the setting of Free Probability theory.

Denote by $\mathcal M$ the family of all Borel probability measures defined on the real line $\R$. Let $X_1, X_2, \dots$ be free self-adjoint identically distributed random variables with distribution $\mu \in \mathcal M$. We always assume that $\mu$ has zero mean and unit variance.
Let $\mu_n$ be the distribution of the normalized sum $S_n : = \frac{1}{\sqrt n} \sum _{j=1}^n X_j$.
In free probability the sequence of measures $\mu_n$ converges to Wigner's semicircle law $\omega$. Moreover, $\mu_n$  is absolutely continuous with respect to the Lebesgue measure for sufficiently large $n$. We denote by $p_{\mu_n}$ the density of $\mu_n$. Define the Cauchy transform of a measure $\mu$:
$$G_\mu(z) = \int_{\R} \frac {\mu(dx)}{z - x},\ \ z\in \C^+,$$
where $\C^+$ denotes the upper half plane.

In \cite{ChG11v2} Chistyakov and G\"otze  obtained a formal power expansion for the Cauchy transform of $\mu_n$
and  an Edgeworth type expansions for $\mu_n$ and $p_{\mu_n}$. In~\cite{GotzeReshetenko2014} the general scheme from~\cite{Gotze1985} was applied to derive a similar result.

\subsection{Expansion of Quadratic von Mises Statistics}
Let $X, \overline X, X_1, ... , X_n$ be independent identically distributed random elements taking values in an arbitrary measurable space $(\mathcal X, \mathcal B)$.
Assume that $g: \mathcal X \rightarrow \R$ and $h: \mathcal X \times \mathcal X \rightarrow \R$  are real-valued measurable functions. In addition we assume that $h$ is symmetric.
We consider the quadratic functional
$$
w_n(\varepsilon_1, ... , \varepsilon_n) = \sum_{k=1}^n \varepsilon_j g(X_j) + \sum_{j,k=1}^n \varepsilon_j \varepsilon_k h(X_j, X_k),
$$
assuming that
$$
\E g(X) = 0, \quad \E(h(X, \overline X)|X) = 0.
$$
We shall derive an asymptotic expansion of $h_n(\varepsilon_1, ... , \varepsilon_n): = \E \exp(it w_n(\varepsilon_1, ... , \varepsilon_n))$.

Consider the measurable space $(\mathcal X, \mathcal B, \mu)$ with measure $\mu:= \mathcal L(X)$. Let $L^2:= L^2(\mathcal X, \mathcal B, \mu)$ denote the real Hilbert space of square integrable real functions.
A Hilbert-Schmidt operator $\mathbb Q: L^2 \rightarrow L^2$ is defined via
$$
\mathbb Q f(x) = \int_{\mathcal X} h(x,y) f(y) \mu(dy) = \E h(x, X) f(X), \quad f \in L^2.
$$
Let $\{e_j, j \geq 1\}$ denote a  complete orthonormal system of eigenfunctions of $\mathbb Q$ ordered by decreasing absolute values of the corresponding eigenvalues $q_1, q_2, ...  $, that is,
$|q_1| \geq |q_2| \geq ...$.
Then
$$
\E h^2(X, \overline X) = \sum_{j=1}^\infty q_j^2 < \infty, \quad h(x,y) = \sum_{j=1}^\infty q_j e_j(x) e_j(y)
$$
If the closed span $\langle \{e_j , j \geq 1\} \rangle \subset L^2$ is a proper subset, it might be necessary to choose functions $e_{-1}, e_0$ such that $\{ e_j, j = -1, 0, 1, ... \}$ is an orthonormal system and
\begin{align*}
g(x) = \sum_{k=0}^\infty g_k e_k(x), \quad h(x,x) = \sum_{k=-1}^\infty h_k e_k(x).
\end{align*}
It is easy to see that $\E e_j(X) = 0$ for all $j$. Therefore $\{e_j(X), j = -1, 0 , 1, ...\}$ is an orthonormal system of mean zero random variables.

We derive an expression for the derivatives of $h_\infty(\lambda_1, ... , \lambda_r)$.
Since for every fixed $k$ the sum $n^{-1/2} (e_k(X_1) + ... + e_k(X_n))$ weakly converges to a standard normal random variable we conclude
that $w_{n+r}(\lambda_1, ... , \lambda_r, n^{-1/2} , ... , n^{-1/2})$ weakly converges to the random
variable
\begin{align*}
w_\infty(\lambda_1, ... , \lambda_r)&:= w_r(\lambda_1, ... , \lambda_r) + \sum_{k=0}^\infty g_k Y_k + \sum_{k=1}^\infty q_k^2 (Y_k^2 - 1) \\
&+\E h(X, X) + 2\sum_{k=1}^\infty q_k \left (\sum_{l=1}^r \lambda_l e_k(X_l)\right) Y_k,
\end{align*}
where $Y_k , k \geq 0$ are independent standard normal random variables.
For every fixed $T$ we get by complex integration
$$
\E \exp \left [ it q_k (Y_k^2 - 1) + 2 it T Y_k \right ] = \frac{1}{\sqrt{1 - 2 i t q_k}} \exp(-i t q_k) \exp \left [  - \frac{2 t^2 T^2}{ \sqrt{1 - 2 i t q_k}} \right ].
$$
This yields
\begin{align} \label{applications: von mises derivatives}
h_\infty(\lambda_1, ... , \lambda_r) &= \varphi(t) \E \exp [ it w_r(\lambda_1, ... , \lambda_r) \\
&+(it)^2 \sum_{k=1}^\infty q_k T_k(\lambda)(2 q_k T_k(\lambda) + g_k)(1 -2it q_k)^{-1}], \nonumber
\end{align}
where $T_k(\lambda) = \sum_{l=1}^r \lambda_l e_k(X_l)$ and
\begin{align*}
\varphi(t) &= \left [ \prod_{k=1}^\infty \frac{1}{\sqrt{1-2it q_k}} \exp(-it q_k)\right ] \exp \left [ it \E h(X_1, X_1) - t^2 \sum_{k=0}^\infty g_k^2 (1 - 2 it q_k)^{-1} /2 \right ].
\end{align*}
Let us introduce the following functions of  $X$ and $\overline X$:
\begin{align*}
&h_t(X, \overline X): = h(X, \overline X) + 2 i t \sum_{k=1}^\infty q_k^2 e_k(X) e_k(\overline X)(1 - 2 it q_k)^{-1},\\
&g_t(X): = g(X) + i t \E\big( h_t(X, \overline X) g(\overline X) | X\big).
\end{align*}
Applying these notations we may rewrite~\eqref{applications: von mises derivatives} in the following way
$$
h_\infty(\lambda_1, ... , \lambda_r)  = \varphi(t) \E \exp \left [ it \sum_{j,k = 1}^n h_t(X_j, X_k) \lambda_j \lambda_k + it \sum_{j=1}^r \lambda_j g_t(X_j) \right ].
$$
Taking  derivatives of $h_\infty$ with respect with $\lambda_1, ... , \lambda_r$ at zero we get
$$
h_n(\varepsilon_1, ... , \varepsilon_n) = \varphi(t) \sum_{r = 0}^{s-3} a_r(t, h , g) + R_s,
$$
where
\begin{align*}
&a_r(t,h,g): = P_r(\ve^{*} \kappa_{*}) \E \exp \left [ it \sum_{j,k = 1}^n h_t(X_j, X_k) \lambda_j \lambda_k + it \sum_{j=1}^r \lambda_j g_t(X_j) \right ]\bigg |_{\lambda_1 = ... = \lambda_r = 0}.
\end{align*}
Higher order $U$-statistics may be treated by similar arguments. See, for example, the result of~\cite{Gotze1989} and~\cite{BentkusGotze1999}.

\subsection{Expansions for Weighted One Sided Kolmogorov-Smirnov Statistics}
Let $X_1, ... X_n$ be an independent identically distributed random variables with uniform distribution in $[0, 1]$. Consider the following statistic
$D^{+}(\varepsilon_1, ... , \varepsilon_n, t) = \sum_{j=1}^n \varepsilon_j (\mathbb I(X_j \le t) - t)$. For example, if  $\varepsilon_j = n^{-1/2}, j = 1, ... , n$ then we have
$D^{+}(t) = n^{1/2}(F_n(t) -t)$, where $F_n(t)$ denotes the empirical distribution function of $X_1, ... , X_n$.
We are interested in the asymptotic expansion of
$$
\Pb(\sup_{0 \le t \le 1} D^{+}(\varepsilon_1, ... , \varepsilon_n, t) > a), \quad a > 0.
$$
It is well known that $h_\infty(0) = \exp[-2 a^2]$ and
\begin{align*}
h_\infty(\lambda) &= \int_0^1 \Pb(x(t) + \lambda(\mathbb I(s < t) - t) > a, 0 \le t \le 1) \, ds \\
&=\int_0^1 \E f_a(s, x(s), \lambda) f_a(1-s, x(s), -\lambda) \, ds,
\end{align*}
where $f_a(s,x,\lambda) = \Pb(x(t) > a + \lambda t, 0\le t \le s | x(s) = x) = \exp(-2a(a + \lambda s - x)/s)$ and $x(t), 0 \le t \le 1$ is a Brownian bridge. For more details, see~\cite{Gotze1985}.
Then it follows from Theorem~\ref{asymptotic_expansion}
that
$$
\Pb(\sup_{0 \le t \le 1} D^{+}(\varepsilon_1, ... , \varepsilon_n, t) > a) = \left [ 1 + \frac{1}{6} \varepsilon^3 \frac{\partial }{\partial a} + \mathcal O(|\varepsilon|_4^4) \right ] \exp(-2 a^2).
$$
Such expansions for equal weights have been derived for example by combinatorial and analytic techniques in~\cite{Lauwerier1963} and \cite{Gnedenko1961}.

\section{Acknowledgements}

We would like to thank the Associate Editor and the  Reviewer for helpful
comments and suggestions.

\def\polhk#1{\setbox0=\hbox{#1}{\ooalign{\hidewidth
  \lower1.5ex\hbox{`}\hidewidth\crcr\unhbox0}}}


\begin{thebibliography}{10}

\bibitem{BentkusGotze1999}
V.~Bentkus and F.~G{\"o}tze.
\newblock Optimal bounds in non-{G}aussian limit theorems for {$U$}-statistics.
\newblock {\em Ann. Probab.}, 27(1):454--521, 1999.

\bibitem{Bergstroem}
H.Bergstr{\"o}m. 
\newblock On the central limit theorem.
\newblock {\em Scandinavian Actuarial Journal},
 1944(3-4):139--153, 1944.

\bibitem{BhatRang2010}
R.~N. Bhattacharya and R.~Ranga~{R}ao.
\newblock Normal approximation and asymptotic expansions.
\newblock {\em SIAM-Society for Industrial and Applied Mathematics}, 2010.

\bibitem{Chaterjee}
S.Chaterjee.
\newblock  A generalization of the Lindeberg principle.
\newblock {\em Ann. Probab.},
 34(6):2061--2076, 2006.

\bibitem{ChG11v2}
G.~P. Chistyakov and F.~G{\"o}tze.
\newblock Asymptotic expansions in the {CLT} in free probability.
\newblock {\em Probab. Theory Related Fields}, 157(1-2):107--156, 2013.

\bibitem{Gnedenko1961}
B.~V. Gnedenko, V.~S. Koroluk, and A.~V. Skorohod.
\newblock Asymptotic expansions in probability theory.
\newblock In {\em Proc. 4th {B}erkeley {S}ympos. {M}ath. {S}tatist. and
  {P}rob., {V}ol. {II}}, pages 153--170. Univ. California Press, Berkeley,
  Calif., 1961.

\bibitem{Gotze1981}
F.~G{\"o}tze.
\newblock On Edgeworth expansions in Banach spaces.
\newblock {\em Ann. Probab.},  9(5):852--859, 1981.


\bibitem{Gotze1985}
F.~G{\"o}tze.
\newblock Asymptotic expansions in functional limit theorems.
\newblock {\em Journal of Multivariate analysis}, 16:1--20, 1985.

\bibitem{Gotze1989}
F.~G{\"o}tze.
\newblock Edgeworth expansions in functional limit theorems.
\newblock {\em Ann. Probab.}, 17(4):1602--1634, 1989.

\bibitem{GoetzeBloznelis2002}
M.~Bloznelis and F.~G{\"o}tze.
\newblock  An Edgeworth expansion for symmetric finite population statistics.
\newblock {\em Ann. Probab.},
 30(3):1238--1265, 2002.

\bibitem{Gotze2004}
F.~G{\"o}tze.
\newblock Lattice point problems and values of quadratic forms.
\newblock {\em Inventiones mathematicae}, 157:195--226, 2004.

\bibitem{GotzeHipp1978}
    F.~{G{\"o}tze,} and C.~{Hipp}
	\newblock Asymptotic expansions in the central limit theorem under moment conditions.
	\newblock { \em Z. Wahrscheinlichkeitstheorie und Verw. Gebiete}, 42(1):67--87, 1978.


\bibitem{GotzeReshetenko2014}
F.~{G{\"o}tze} and A.~{Reshetenko}.
\newblock Asymptotic expansions in free limit theorems.
\newblock {\em arXiv:1408.1360}.

\bibitem{GotzeZaitsev2014}
F.~G{\"o}tze and A.~Yu. Zaitsev.
\newblock Explicit rates of approximation in the {CLT} for quadratic forms.
\newblock {\em Ann. Probab.}, 42(1):354--397, 2014.

\bibitem{JingWang2010}
B.-Y.~Jing and Q.Wang. 
\newblock A unified approach to Edgeworth expansions for a general class of statistics 
\newblock {\em Statistica Sinica}, 20:613--636, 2010.


\bibitem{KlarSodin2011}
B.~Klartag and S.~Sodin.
\newblock Variations on the {B}erry-{E}sseen theorem.
\newblock {\em Teor. Veroyatn. Primen.}, 56(3):514--533, 2011.

\bibitem{Lauwerier1963}
H.~A. Lauwerier.
\newblock The asymptotic expansion of the statistical distribution of {N}. {V}.
  {S}mirnov.
\newblock {\em Z. Wahrscheinlichkeitstheorie und Verw. Gebiete}, 2:61--68,
  1963.


\bibitem{Lindeberg}
J.W..~Lindeberg.
\newblock Eine neue Herleitung des
Exponentialgesetzes in der
Wahrscheinlichkeitsrechnung
\newblock {\em Math. Z.}, 15:211--225, 1922. 

\bibitem{Petrov1975}
V.~V. Petrov.
\newblock {\em Sums of independent random variables}.
\newblock Springer-Verlag, New York, 1975.

\bibitem{ProkhUlyanov2013}
Y.~V. Prokhorov and V.~V. Ulyanov.
\newblock Some approximation problems in statistics and probability.
\newblock In {\em Limit theorems in probability, statistics and number theory},
  volume~42 of {\em Springer Proc. Math. Stat.}, pages 235--249. Springer,
  Heidelberg, 2013.

\bibitem{ulyanov1986}
V.V. Ulyanov.
\newblock Normal approximation for sums of nonidentically distributed random
  variables in {H}ilbert spaces.
\newblock {\em Acta Scientiarum Mathematicarum}, 50(3--4):411--419, 1986.

\bibitem{ulyanov1987}
V.V. Ulyanov.
\newblock Asymptotic expansions for distributions of sums of independent random
  variables in {H}.
\newblock {\em Theory of Probability and its Applications}, 31(1):25--39, 1987.

\end{thebibliography}
\end{document}